\theoremstyle{plain}
\newtheorem{thm}{\protect\theoremname}
  \theoremstyle{plain}
  \newtheorem{prop}[thm]{\protect\propositionname}
  \theoremstyle{definition}
  \newtheorem*{example*}{\protect\examplename}
  \theoremstyle{plain}
  \newtheorem{assumption}[thm]{\protect\assumptionname}
  \theoremstyle{plain}
  \newtheorem{cor}[thm]{\protect\corollaryname}
\renewcommand{\hat}{\widehat}
\renewcommand{\tilde}{\widetilde}
  \providecommand{\assumptionname}{Assumption}
  \providecommand{\corollaryname}{Corollary}
  \providecommand{\examplename}{Example}
  \providecommand{\propositionname}{Proposition}
\providecommand{\theoremname}{Theorem}
\begin{document}
\selectlanguage{british}%
\global\long\def\phi{\varphi}
\global\long\def\epsilon{\varepsilon}
\global\long\def\eps{\varepsilon}
\global\long\def\theta{\vartheta}
\global\long\def\E{\mathbb{E}}
\global\long\def\N{\mathbb{N}}
\global\long\def\Z{\mathbb{Z}}
\global\long\def\R{\mathbb{R}}
\global\long\def\F{\mathcal{F}}
\global\long\def\le{\leqslant}
\global\long\def\ge{\geqslant}
\global\long\def\MT{\ensuremath{\clubsuit}}
\global\long\def\1{\mathbbm1}
\global\long\def\d{\mathrm{d}}
\global\long\def\P{\mathbb{P}}
 \global\long\def\subset{\subseteq}
\global\long\def\supset{\supseteq}
\global\long\def\argmin{\arg\,\min}
\global\long\def\bull{{\scriptstyle \bullet}}
\global\long\def\supp{\operatorname{supp}}
\global\long\def\sgn{\operatorname{sign}}

\selectlanguage{english}%

\title{On infinitely divisible distributions with polynomially decaying
characteristic functions}

\author{Mathias Trabs%
\thanks{Humboldt-Universität zu Berlin, Email: trabs@math.hu-berlin.de%
}}
\maketitle
\begin{abstract}
We provide necessary and sufficient conditions on the characteristics
of an infinitely divisible distribution under which its characteristic
function $\phi$ decays polynomially. Under a mild regularity condition
this polynomial decay is equivalent to $1/\phi$ being a Fourier multiplier
on Besov spaces.
\end{abstract}
\emph{Keywords:} Deconvolution operator, Fourier multiplier theorem,
Lévy process, regular density, self-decomposable distribution.

\begin{doublespace}
\noindent \emph{MSC (2000):} Primary: 60E07; Secondary: 46N39, 60E10,
62G07, 60G51.
\end{doublespace}

\section{Introduction}

Any infinitely divisible distribution (IDD) $\mu$ is uniquely determined
by its characteristic triplet $(\sigma^{2},\gamma,\nu)$ where $\sigma^{2}>0$
is the diffusion coefficient, $\gamma\in\R$ is a drift parameter
and $\nu$ is the so-called Lévy measure. By the Lévy-Khintchine formula
the characteristic function of $\mu$ is given by 
\begin{align*}
\phi(u): & =\F\mu(u):=\int e^{iux}\mu(\d x)\\
 & =\exp\Big(-\frac{\sigma^{2}}{2}u^{2}+i\gamma u+\int\big(e^{iux}-1-iux\1_{[-1,1]}(x)\big)\nu(\d x)\Big),\quad u\in\R.
\end{align*}
The key question of this article is under which conditions on the
characteristic triplet $|\phi(u)|$ decays polynomially for $|u|\to\infty$.
Note that, first, $|\phi(u)|$ does not depend on $\gamma$ and, second,
if $\sigma^{2}>0$ then $|\phi(u)|$ is of the order $e^{-\sigma^{2}u^{2}/2}$.
Hence, we basically have to study the interplay between the Lévy measure
$\nu$ and the behavior of $|\phi(u)|$ as $|u|\to\infty$.

Due to the connection between the decay of a characteristic function
$\phi$ and the regularity of the transition density of the corresponding
Lévy processes established by \citet{hartmanWintner1942}, upper bounds
for $|\phi|$ have attracted a certain interest in the literature.
\citet{orey1968,kallenberg1981,knopovaSchilling2013} study necessary
and sufficient conditions for an exponential decay of $\phi$ and
thus for the existence of infinitely smooth transition densities.
The less regular case of polynomially decaying characteristic functions
is essentially studied for self-decomposable distributions only. Here,
the existence of polynomial upper bounds is in detail analyzed, see
\citet[Chap. 28]{sato1999} and references therein. 

While upper bounds for $|\phi|$ are more interesting from the probabilistic
perspective, lower bounds are highly relevant from a statistical point
of view. Surprisingly, polynomially lower bounds are only known for
several explicit parametric classes of IDDs, for instance, the family
of Gamma distributions. \citet{trabs:2011} has established a polynomial
lower bound for a class of Lévy processes which is slightly larger
than self-decomposable processes. 

Inspired by the results on self-decomposable distributions, we will
show that under a mild regularity assumption on $\nu$ in a neighborhood
of the origin, $\phi$ decays polynomially if and only if there is
no diffusion component and the Lebesgue density of $x\nu(\d x)$ (which
we assume to exist at zero) is bounded. From the degree of the polynomial
decay we conclude how many continuous derivatives the probability
density admits and we will show that this number is sharp in the sense
that there cannot be more derivatives, generalizing the result on
self-decomposable distributions by \citet{wolfe1971}. 

Let us illustrate the statistical interest in lower bounds on $|\phi|$
in prototypical deconvolution model. We observe an i.i.d. sample 
\begin{equation}
Y_{i}=X_{i}+\eps_{i},\quad i=1,\dots,n,\label{eq:decon}
\end{equation}
where the target population $X_{1},\dots,X_{n}$ is corrupted by independent
additive noise $\eps_{i}$. In many applications the error $\epsilon_{i}$
can be understood as an aggregation of many small independent influences.
Therefore, IDDs build a natural class of error distributions since
they can be characterized as the class of limit distributions of the
sum of independent, uniformly asymptotically negligible random variables.
Indeed, popular examples like the normal or the Laplace distribution
are IDD. As shown by \citet{fan1991} nonparametric convergence rates
for estimating the distribution of $X_{i}$ depend on the decay of
the characteristic function of $\eps_{i}$. In particular, a polynomial
decay corresponds to mildly ill-posed estimation problems allowing
polynomial convergence rates which is much faster than the logarithmic
rates in the case of an exponential decay. Due to the auto-deconvolution
structure of discretely observed Lévy processes reported by \citet{belomestnyReiss2006},
estimating the characteristic triplet of a Lévy process depends on
the decay of the characteristic function of the marginal IDD, too.

Since the observations $Y_{i}$ are distributed according to the convolution
of the distributions of $X_{i}$ and $\eps_{i}$, we have to divide
in the Fourier domain the (estimated) characteristic function of $Y_{i}$
by the characteristic function of $\eps_{i}$ to assess the distribution
of $X_{i}$. This spectral approach gives raise to the map $f\mapsto\F^{-1}[\F f/\phi]$.
Slightly abusing notation, we consequently denote $\F^{-1}[1/\phi]$
as the \emph{deconvolution operator} which has a prominent role in
the statistical analysis of the deconvolution model and related models.
To analyze its mapping properties, the Fourier multiplier approach
by \citet{nicklReiss2012} is extremely useful. Studying a Lévy process
model, they have shown under certain sufficient assumptions that $1/\phi$
is a Fourier multiplier on Besov spaces. We refer to \citet{triebel2010}
for definitions and properties of the Besov spaces $B_{p,q}^{s}(\R),s\in\R,p,q\in[1,\infty]$.
We say $1/\phi$ satisfies the \emph{Fourier multiplier property}
if there exists some $\alpha>0$ such that for all $s\in\R,1\le p,q\le\infty$
the linear map 
\begin{equation}
B_{p,q}^{s+\alpha}(\R)\ni f\mapsto\F^{-1}\Big[\frac{\F f}{\phi}\Big]\in B_{p,q}^{s}(\R)\label{eq:fmp}
\end{equation}
is bounded. In the deconvolution model the Fourier multiplier approach
and the closely related pseudo-differential operators were exploited
in a series of recent papers \citet{soehlTrabs2012,SchmidtHieberEtAll2012,dattnerEtAl2013}
as well as \citet{Diss2014} for an overview. Minimal conditions on
the IDD which imply the Fourier multiplier property are desirable
to be able to apply this approach in a wide area of models. It turns
out that this mapping property is very natural in the context of IDDs:
We show that a polynomial decay of the characteristic function is
necessary and (under a mild regularity condition) sufficient to conclude
that $1/\phi$ is a Fourier multiplier on Besov spaces.

\section{Polynomial decay of the characteristic function}

Before we precisely state our results, let us introduce some notation.
The space of finite signed Borel measures on the real line will be
denoted by $\mathcal{M}(\R).$ For any $\mu\in\mathcal{M}(\R)$ there
are two positive finite measures $\mu^{+},\mu^{-}$ such that $\mu(A)=\mu^{+}(A)-\mu^{-}(A)$
for any $A\in\mathscr{B(\R).}$ Using this so-called Jordan-decomposition,
the total variation norm on $\mathcal{M}(\R)$ is defined by 
\[
\|\mu\|_{TV}:=\mu^{+}(\R)+\mu^{-}(\R).
\]
If a function $f\colon\R\to\R$ is locally integrable, it defines
a distribution $T_{f}(\psi)=\int\psi f$ on the test function space
$\mathcal{D}(\R)$ of infinitely smooth functions with bounded support.
If the distributional derivative of $T_{f}$ is a finite signed measure
$\rho\in\mathcal{M}(\R)$, the function $f$ is called (weakly) differentiable
with derivative $Df:=\rho.$ The space of functions of bounded variation
is then defined by
\[
BV(\R):=\left\{ f\colon\R\to\R\text{ locally integrable},Df\in\mathcal{M}(\R)\right\} 
\]
with bounded variation norm $\|f\|_{BV}:=\|Df\|_{TV}$ for $f\in BV(\R)$.
The measure $Df$ satisfies $Df((a,b])=f(b+)-f(a+)$ for $-\infty<a<b<\infty.$
We will write $\lesssim,\gtrsim$ for inequalities up to constants.

Let $\mu$ be an IDD with characteristic triplet $(\sigma^{2},\gamma,\nu)$.
Defining the symmetrized jump measure by 
\[
\nu_{s}(A):=\nu(A)+\nu(-A)
\]
 for any Borel set $A\in\mathscr{B}(\R_{+}),$ the absolute value
of the characteristic function is given by 
\begin{align*}
|\phi(u)| & =\exp\Big(-\frac{\sigma^{2}u^{2}}{2}+\int(\cos(ux)-1)\nu(\d x)\Big)\\
 & =\exp\Big(-\frac{\sigma^{2}u^{2}}{2}+\int_{0}^{\infty}(\cos(ux)-1)\nu_{s}(\d x)\Big).
\end{align*}

The following proposition is inspired by the behavior of self-decomposable
distributions. To prove it, we shall generalize Lemma 2.1 by \citet{trabs:2011}
and its counterpart Lemma 53.9 in \citet{sato1999}. It turns out
that a polynomial decay of the characteristic function holds true
for a class of IDDs which is much larger than self-decomposable distributions. 
\begin{prop}
\label{prop:PolDecay} If $\sigma^{2}=0$ and $x\nu_{s}(\d x)$ admits
on an interval $[0,\delta],$ for some $\delta>0,$ a Lebesgue density
$k_{s}\in BV([0,\delta])$, then for any $\eps>0$ 
\[
(1+|u|)^{-\alpha-\eps}\lesssim|\phi(u)|\lesssim(1+|u|)^{-\alpha+\eps}\qquad\text{with\quad}\alpha:=k_{s}(0+).
\]
 If, moreover, $\int_{0}^{\delta}\log(y^{-1})(Dk_{s})^{+}(\d y)<\infty$,
then $|\phi(u)|\gtrsim(1+|u|)^{-\alpha}$. \end{prop}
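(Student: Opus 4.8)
The plan is to work directly with the representation of $|\phi(u)|$ displayed above: since $\sigma^{2}=0$,
\[
\log|\phi(u)|=\int_{0}^{\infty}(\cos(ux)-1)\,\nu_{s}(\d x),
\]
and by symmetry it suffices to treat $u>0$. The tail $\int_{\delta}^{\infty}(\cos(ux)-1)\nu_{s}(\d x)$ is bounded in $u$ because $|\cos(ux)-1|\le 2$ and $\nu_{s}([\delta,\infty))<\infty$ for any L\'evy measure, so the whole problem reduces to the behaviour of $I(u):=\int_{0}^{\delta}(\cos(ux)-1)\frac{k_{s}(x)}{x}\,\d x$, using $\nu_{s}(\d x)=x^{-1}k_{s}(x)\,\d x$ on $(0,\delta]$. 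The elementary object controlling everything is $g(X):=\int_{0}^{X}\frac{\cos t-1}{t}\,\d t$; I would record once and for all that $g$ is nonincreasing (its integrand is $\le 0$), that $|g(X)|\le 1$ on $[0,1]$ (since $|\cos t-1|\le t^{2}/2$), and that $g(X)=-\log X+O(1)$ as $X\to\infty$ (the $-1/t$ part integrates to $-\log X$ while $\int_{1}^{X}t^{-1}\cos t\,\d t$ stays bounded by Dirichlet's test). The substitution $t=ux$ gives $\int_{0}^{\delta}\frac{\cos(ux)-1}{x}\,\d x=g(u\delta)=-\log u+O(1)$.

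Next I would split $k_{s}(x)=\alpha+(k_{s}(x)-\alpha)$. The constant part contributes $\alpha\,g(u\delta)=-\alpha\log u+O(1)$, which already produces the exact exponent. For the remainder I would exploit the bounded-variation structure: working with the right-continuous version, $k_{s}(x)-\alpha=k_{s}(x)-k_{s}(0+)=Dk_{s}((0,x])$, so by Fubini
\[
B(u):=\int_{0}^{\delta}(\cos(ux)-1)\frac{k_{s}(x)-\alpha}{x}\,\d x=\int_{(0,\delta]}\big(g(u\delta)-g(uy)\big)\,Dk_{s}(\d y),
\]
the interchange being justified since $|Dk_{s}|$ is finite and $x^{-1}|\cos(ux)-1|$ is integrable on $(0,\delta]$. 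The single estimate on which the whole argument hinges is the uniform bound $|g(u\delta)-g(uy)|\le\min\big(\log(\delta/y),\log(u\delta)\big)+C$ for $0<y\le\delta$, which I would obtain by distinguishing the case $uy\ge 1$ (where the $-1/t$ part gives $\log(\delta/y)$ and the cosine part is bounded) from $uy<1$ (where $g(uy)$ is bounded and $g(u\delta)=-\log(u\delta)+O(1)$).

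For the first assertion I would divide this estimate by $\log u$: the integrand $\min(\log(\delta/y),\log(u\delta))/\log u$ stays bounded by a constant and tends to $0$ pointwise as $u\to\infty$, so dominated convergence against the finite measure $|Dk_{s}|$ yields $B(u)=o(\log u)$. Combining the pieces gives $\log|\phi(u)|=-\alpha\log u+o(\log u)$, hence both $(1+|u|)^{-\alpha-\eps}\lesssim|\phi(u)|\lesssim(1+|u|)^{-\alpha+\eps}$ after absorbing the bounded range of $u$ into the constants (there $|\phi|$ is continuous, positive and $\le 1$).

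The refined lower bound is where the positive part enters, and I expect this to be the main subtlety. Since $\cos\le 1$ and $y\le\delta$, one has $g(u\delta)-g(uy)=\int_{uy}^{u\delta}\frac{\cos t-1}{t}\,\d t\le 0$ for every $y$. Splitting $Dk_{s}=(Dk_{s})^{+}-(Dk_{s})^{-}$ in the formula for $B(u)$, the $(Dk_{s})^{-}$ contribution is then automatically $\ge 0$ and may be discarded for a lower bound, while the $(Dk_{s})^{+}$ contribution is controlled by the uniform estimate as
\[
\Big|\int_{(0,\delta]}\big(g(u\delta)-g(uy)\big)(Dk_{s})^{+}(\d y)\Big|\le\int_{(0,\delta]}\big(\log(\delta/y)+C\big)(Dk_{s})^{+}(\d y),
\]
which is finite exactly under the hypothesis $\int_{0}^{\delta}\log(y^{-1})(Dk_{s})^{+}(\d y)<\infty$. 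Hence $B(u)$ is bounded below, $\log|\phi(u)|\ge-\alpha\log u-C$, and $|\phi(u)|\gtrsim(1+|u|)^{-\alpha}$. The crux is precisely this sign observation: it is what renders the negative variation of $k_{s}$ harmless and isolates $(Dk_{s})^{+}$ as the only quantity for which a logarithmic moment is needed.
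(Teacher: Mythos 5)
Your proof is correct, and its engine is the same as the paper's: Fubini over the measure $Dk_{s}$ combined with the asymptotics $\int_{0}^{X}\frac{1-\cos t}{t}\,\d t=\log X+O(1)$, and, for the addendum, the observation that the kernel $\int_{uy}^{u\delta}\frac{\cos t-1}{t}\,\d t$ is nonpositive, so the negative variation of $k_{s}$ only helps the lower bound and a logarithmic moment is needed for $(Dk_{s})^{+}$ alone. The one genuine structural difference lies in the first assertion: you anchor the expansion at $x=0+$ over the whole interval $(0,\delta]$ and extract the $\eps$-loss by dominated convergence applied to $B(u)/\log u$ (using the uniform bound $|g(u\delta)-g(uy)|\le\min(\log(\delta/y),\log(u\delta))+C$), whereas the paper anchors at $x=1/u$, restricts to $[1/u,\tau]$, and chooses the truncation level $\tau=\tau(\eps)$ so small that $k_{s}(1/u+)$ is within $\eps$ of $\alpha$ and the total variation of $k_{s}$ on $(0,\tau]$ contributes less than $\eps$ to the exponent. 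The two mechanisms are interchangeable here; yours yields the slightly cleaner statement $\log|\phi(u)|=-\alpha\log u+o(\log u)$ in one stroke, while the paper's $\tau$-truncation is the form that is reused verbatim in the proof of Theorem~\ref{thm:CharPolDec} (iii)$\Rightarrow$(i), where one needs quantitative control through $k_{s}^{\pm}(\tau)$ rather than a mere $o(\log u)$ remainder.
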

\begin{proof}
Using 
\[
|\phi(u)|=\exp\Big(\int_{0}^{\infty}\frac{\cos(ux)-1}{x}k_{s}(x)\d x\Big)
\]
 and the symmetry of the cosine function, we can assume $u>0$ without
loss of generality. We denote $\tilde{\rho}=Dk_{s}$ and let $\tau\in(0,\delta\wedge1)$.
Since $\|k_{s}\1_{[0,\delta]}\|_{\infty}\le\|k_{s}\1_{[0,\delta]}\|_{BV}<\infty,$
we estimate for $u\le1/\tau$ 
\begin{align*}
1\ge|\phi(u)|= & \exp\Big(\int_{0}^{\delta}\frac{\cos(ux)-1}{x}k_{s}(x)\d x+\int_{\delta}^{\infty}(\cos(ux)-1)\nu_{s}(\d x)\Big)\\
\ge & \exp\Big(\|k_{s}\|_{L^{\infty}([0,\delta])}\int_{0}^{u\delta}\frac{\cos x-1}{x}\d x-2\int_{\delta}^{\infty}\nu_{s}(\d x)\Big)\\
\ge & \exp\Big(-\|k_{s}\|_{L^{\infty}([0,\delta])}\sup_{v\in(0,\delta/\tau]}\int_{0}^{v}\frac{1-\cos x}{x}\d x-2\int_{\delta}^{\infty}\nu_{s}(\d x)\Big),
\end{align*}
 where the last line is a positive constant independent of $u$. It
remains to consider the tail behavior of $|\phi(u)|$ for $u>1/\tau$.

To show the upper bound of $|\phi(u)|$ for $u>1/\tau$. We use Fubini's
theorem and the finite constant $c_{1}:=\sup_{v\ge1}\int_{1}^{v}\frac{\cos x}{x}\d x>0$
to estimate 
\begin{align}
|\phi(u)|\le & \exp\Big(\int_{1/u}^{\tau}\frac{\cos(ux)-1}{x}k_{s}(x)\d x\Big)\nonumber \\
= & \exp\Big(\int_{1/u}^{\tau}\frac{\cos(ux)-1}{x}\int_{1/u}^{x}\tilde{\rho}(\d y)\d x+k_{s}(\tfrac{1}{u}+)\int_{1}^{\tau u}\frac{\cos(x)-1}{x}\d x\Big)\nonumber \\
= & \exp\Big(\int_{1/u}^{\tau}\int_{y}^{\tau}\frac{\cos(ux)-1}{x}\d x\tilde{\rho}(\d y)+k_{s}(\tfrac{1}{u}+)\Big(\int_{1}^{\tau u}\frac{\cos x}{x}\d x-\log(\tau u)\Big)\Big)\nonumber \\
\le & \exp\Big(2\int_{1/u}^{\tau}\big(\log\tau+\log(\tfrac{1}{y})\big)\tilde{\rho}^{-}(\d y)-(\log u)k_{s}(\tfrac{1}{u}+)\nonumber \\
 & \qquad\qquad+\bigl(c_{1}+\log(\tfrac{1}{\tau})\bigr)k_{s}\big(\tfrac{1}{u}+\big)\Big)\nonumber \\
\le & \exp\Big(-(\log u)\big(k_{s}(\tfrac{1}{u}+)-2\int_{1/u}^{\tau}\tilde{\rho}^{-}(\d y)\big)+(c_{1}+\log(\tfrac{1}{\tau}))k_{s}\big(\tfrac{1}{u}+\big)\Big).\label{eqPhiUpperBound}
\end{align}
 Hence, for any $\epsilon>0$ we find some $\tau\in(0,\delta)$ which
is sufficiently small such that $\left|\phi(u)\right|\lesssim u^{-(\alpha-\epsilon)}$
for all $u>1/\tau.$

To verify the lower bound, we decompose for $\tau\in(0,\delta)$ and
$u>1/\tau$ 
\begin{align}
|\phi(u)|= & \exp\Big(\Big(\int_{0}^{1/u}+\int_{1/u}^{\tau}\Big)\frac{\cos(ux)-1}{x}k_{s}(x)\d x+\int_{\tau}^{\infty}(\cos(ux)-1)\nu_{s}(\d x)\Big),\label{eqPhiDecomp}
\end{align}
 where the three integrals will be bounded separately from below.
Using $\|k_{s}\1_{[0,\delta]}\|_{\infty}<\infty$, we estimate the
first integral by 
\[
\int_{0}^{1/u}\frac{\cos(ux)-1}{x}k_{s}(x)\d x\ge\|k_{s}\1_{[0,\delta]}\|_{\infty}\int_{0}^{1}\frac{\cos x-1}{x}\d x,
\]
 where the integral on the right-hand side is a negative finite constant
independent of $u$. The third integral in (\ref{eqPhiDecomp}) can
be bounded by 
\[
\int_{\tau}^{\infty}(\cos(ux)-1)\nu_{s}(\d x)\ge-2\int_{\tau}^{\infty}\nu_{s}(\d x).
\]
 It remains to estimate the second integral, where we proceed similarly
to the upper bound. We obtain with the finite constant $c_{2}:=\inf_{v\ge1}\int_{1}^{v}\frac{\cos x}{x}\d x<0$
\begin{align*}
 & \int_{1/u}^{\tau}\frac{\cos(ux)-1}{x}k_{s}(x)\d x\\
 & \qquad=\int_{1/u}^{\tau}\int_{y}^{\tau}\frac{\cos(ux)-1}{x}\d x\tilde{\rho}(\d y)+k_{s}(\tfrac{1}{u}+)\big(\int_{1}^{\tau u}\frac{\cos x}{x}\d x-\log(\tau u)\big)\\
 & \qquad\ge-2\int_{1/u}^{\tau}\big(\log\tau+\log(y^{-1})\big)\tilde{\rho}^{+}(\d y)-k_{s}(\tfrac{1}{u}+)\log u+c_{2}k_{s}\big(\tfrac{1}{u}+\big)\\
 & \qquad=-(\log u)\Big(k_{s}(\tfrac{1}{u}+)+2\int_{1/u}^{\tau}\tilde{\rho}^{+}(\d y)\Big)+c_{2}k_{s}\big(\tfrac{1}{u}+\big).
\end{align*}
 This yields $\left|\phi(u)\right|\gtrsim u^{-(\alpha+\epsilon)}$
for any $\epsilon>0$ and for all $u>1/\tau$ with $\tau$ sufficiently
small.

The addendum follows from the previous estimates, the additional assumption
and 
\begin{align*}
 & \int_{1/u}^{\tau}\frac{\cos(ux)-1}{x}k_{s}(x)\d x\\
 & \qquad=\int_{0}^{\tau}\int_{y}^{\tau}\frac{\cos(ux)-1}{x}\d x\tilde{\rho}(\d y)+k_{s}(0+)\Big(\int_{1}^{\tau u}\frac{\cos x}{x}\d x-\log(\tau u)\Big)\\
 & \qquad\ge-2\int_{0}^{\tau}\log(y^{-1})\tilde{\rho}^{+}(\d y)-\alpha\log u+c_{2}\alpha.
\end{align*}
 \end{proof}
\begin{example*}
If $\mu$ is a self-decomposable distribution, then the Lévy measure
admits always a Lebesgue density of the form $\nu(\d x)=\frac{k(x)}{|x|}\d x$
where the so-called k-function $k$ is increasing on the negative
half line and decreasing on the positive half line. Hence, $k_{s}\1_{(0,\delta]}\in BV(\R)$
is equivalent to $\|k\|_{\infty}<\infty$. The condition $\int_{0}^{\delta}\log(y^{-1})(Dk_{s})^{+}(\d y)<\infty$
is always satisfied for self-decomposable distributions owing to $(Dk_{s})^{+}=0$.
As indicated by \citet{trabs:2011} it is more generally sufficient
if the quotient $(k_{s}(x+)-\alpha)/x$ is bounded from above uniformly
in $x\in(0,\eps]$ for some $\eps>0$, meaning that the largest slope
of $k_{s}$ near zero is bounded, to obtain the sharp bound of the
decay rate. 
\end{example*}
With Proposition~\pageref{prop:PolDecay} at hand we can characterize
IDDs with polynomially decaying characteristic functions under the
following regularity assumption on $\nu$ near the origin. 
\begin{assumption}
\label{ass:Regularity}Assume that $x\nu_{s}(\d x)$ admits on a small
interval $(0,\delta],$ for some $\delta\in(0,1),$ a Lebesgue density
$k_{s}$ with $k_{s}\in BV([\eps,\delta])$ for all $\eps\in(0,\delta)$.
Suppose that $\|(Dk_{s})^{+}\|_{TV([0,\delta])}=\lim_{\eps\downarrow0}\|(Dk_{s})^{+}\|_{TV([\eps,\delta])}<\infty.$ 
\end{assumption}
Assumption~\ref{ass:Regularity} basically excludes Lévy measures
which oscillate at zero or have additional singularities in any neighborhood
of the origin. Both possibilities are not natural in applications,
for instance, for the deconvolution setting or the modeling of stochastic
processes via Lévy processes.
\begin{thm}
\label{thm:CharPolDec} Let $\mu$ be an infinitely divisible distribution
satisfying Assumption~\ref{ass:Regularity}. Then the following are
equivalent: 
\begin{enumerate}
\item $\sigma^{2}=0$ and $\|k_{s}\|_{BV([0,\delta])}<\infty$, 
\item $\sigma^{2}=0$ and $\lim_{x\to0}k_{s}(x)<\infty,$ 
\item there is some $\alpha>0$ and some constant $c>0$ such that $|\phi(u)|\ge c(1+|u|)^{-\alpha}$
for all $u\in\R$. 
\end{enumerate}
\end{thm}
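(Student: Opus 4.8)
The plan is to establish the equivalence $(i)\Leftrightarrow(ii)$ directly from the structure of $k_{s}$ near the origin, and then to close the loop through $(i)\Rightarrow(iii)$, a direct application of Proposition~\ref{prop:PolDecay}, and $(iii)\Rightarrow(ii)$, the genuinely new estimate. One preliminary observation shows that $\alpha:=\lim_{x\to0+}k_{s}(x)$ is a well-defined element of $[0,\infty]$, which also makes condition $(ii)$ meaningful. Under Assumption~\ref{ass:Regularity} the density $k_{s}$ is nonnegative, belongs to $BV([\eps,\delta])$ for every $\eps\in(0,\delta)$, and has finite positive variation on $(0,\delta]$. From $Dk_{s}((a,b])=k_{s}(b+)-k_{s}(a+)$ I would rewrite, for a fixed $b\in(0,\delta]$ and $a\downarrow0$,
\[
k_{s}(a+)=k_{s}(b+)-(Dk_{s})^{+}((a,b])+(Dk_{s})^{-}((a,b]),
\]
in which the positive-variation term increases to the finite value $(Dk_{s})^{+}((0,b])$ while the negative-variation term increases to some limit in $[0,\infty]$. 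Thus $\alpha$ exists in $[0,\infty]$ and is finite exactly when $(Dk_{s})^{-}$ is finite near zero. This yields $(i)\Leftrightarrow(ii)$: if $\alpha<\infty$ then $\|(Dk_{s})^{-}\|_{TV([0,\delta])}<\infty$, hence $\|k_{s}\|_{BV([0,\delta])}=\|(Dk_{s})^{+}\|_{TV}+\|(Dk_{s})^{-}\|_{TV}<\infty$; conversely a finite $BV$-norm makes $k_{s}$ bounded with a finite one-sided limit at the origin.

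For $(i)\Rightarrow(iii)$ I would invoke Proposition~\ref{prop:PolDecay}, whose hypothesis is exactly $(i)$. With $\alpha_{0}:=k_{s}(0+)<\infty$ its general lower bound gives, upon choosing $\eps=1$, the estimate $|\phi(u)|\gtrsim(1+|u|)^{-\alpha_{0}-1}$, which is precisely $(iii)$ with exponent $\alpha_{0}+1$. The point is that the exponent in $(iii)$ is only required to exist, so I need neither the exact rate $\alpha_{0}$ nor the additional log-integrability hypothesis of the proposition's addendum.

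The substantive direction is $(iii)\Rightarrow(ii)$, which I would prove by contraposition together with an elementary bound on $\sigma^{2}$. Since $\int_{0}^{\infty}(\cos(ux)-1)\nu_{s}(\d x)\le0$, the representation of $|\phi|$ gives $|\phi(u)|\le e^{-\sigma^{2}u^{2}/2}$, so a positive $\sigma^{2}$ would contradict any polynomial lower bound; hence $(iii)$ forces $\sigma^{2}=0$. To rule out $\alpha=\infty$, suppose it holds; then for every $M>0$ there is $\tau\in(0,\delta)$ with $k_{s}\ge M$ a.e.\ on $(0,\tau)$. Since $\log|\phi(u)|=\int_{0}^{\infty}(\cos(ux)-1)\nu_{s}(\d x)$ integrates a nonpositive integrand, discarding the contribution of $[\tau,\infty)$ only increases it, and on $(0,\tau)$ we have $\nu_{s}(\d x)=x^{-1}k_{s}(x)\,\d x$, so
\[
\log|\phi(u)|\le\int_{0}^{\tau}\frac{\cos(ux)-1}{x}k_{s}(x)\,\d x\le M\int_{0}^{u\tau}\frac{\cos v-1}{v}\,\d v=-M\log u+c(M,\tau),
\]
where $c(M,\tau)$ is independent of $u$ because $\int_{0}^{V}\frac{\cos v-1}{v}\,\d v=-\log V+O(1)$. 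Consequently $\log|\phi(u)|/\log u\to-\infty$, i.e.\ $|\phi|$ decays faster than every polynomial, contradicting $(iii)$. Therefore $\alpha<\infty$, which is $(ii)$.

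The step I expect to be the main obstacle is not any single estimate but the role of Assumption~\ref{ass:Regularity} in the contraposition: the decay argument needs the uniform bound $k_{s}\ge M$ on a full neighborhood of $0$, and this is available only because finiteness of the positive variation forces $\lim_{x\to0+}k_{s}(x)$ to exist in $[0,\infty]$. Without this regularity $k_{s}$ could oscillate with $\limsup_{x\to0}k_{s}=\infty$ but bounded $\liminf$; then $k_{s}\ge M$ would fail on every interval, and super-polynomial decay could no longer be concluded, since the lower bound might survive along subsequences of $u$. Verifying carefully that the preliminary variation decomposition indeed upgrades the finiteness of $(Dk_{s})^{+}$ into existence of the limit, and hence into the clean dichotomy $\alpha<\infty$ versus $\alpha=\infty$, is where the argument must be handled with the most care.
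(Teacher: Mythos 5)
Your proof is correct. The skeleton coincides with the paper's for two of the three steps: the equivalence (i)$\Leftrightarrow$(ii) via the Jordan decomposition of $Dk_{s}$ (the paper phrases it through the monotone functions $k_{s}^{\pm}(x)=\int_{x}^{\delta}(Dk_{s})^{\pm}(\d y)$, you through the identity $k_{s}(a+)=k_{s}(b+)-(Dk_{s})^{+}((a,b])+(Dk_{s})^{-}((a,b])$, which is the same computation), and (i)$\Rightarrow$(iii) by taking $\eps=1$ in the general lower bound of Proposition~\ref{prop:PolDecay}, not its addendum. Where you genuinely diverge is the hard direction. The paper proves (iii)$\Rightarrow$(i) directly: after ruling out $\sigma^{2}>0$, it repeats the Fubini/integration-by-parts computation of the proposition to obtain the upper bound $|\phi(u)|\le\exp\bigl(-\int_{1}^{\tau u}\frac{1-\cos x}{x}\d x\,\bigl(k_{s}(\delta+)-k_{s}^{+}(\tfrac{1}{u})+k_{s}^{-}(\tau)\bigr)\bigr)$, compares it with the assumed lower bound via an auxiliary function $f$ with $\int_{1}^{y}\frac{1-\cos x}{x}\d x=f(y)\log y$, and extracts the quantitative estimate $\|(Dk_{s})^{-}\|_{TV([0,\delta])}\le2\alpha+4\|(Dk_{s})^{+}\|_{TV([0,\delta])}$. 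You instead prove (iii)$\Rightarrow$(ii) by contraposition: once the preliminary observation guarantees that $\lim_{x\to0+}k_{s}(x+)$ exists in $[0,\infty]$ (the only place finiteness of $(Dk_{s})^{+}$ enters), divergence at the origin yields $k_{s}\ge M$ on a full neighbourhood of $0$ and hence $\log|\phi(u)|\le-M\log u+c(M,\tau)$ for every $M$, i.e.\ super-polynomial decay; your estimate of $\int_{0}^{u\tau}\frac{\cos v-1}{v}\d v$ is correct and the sign of the integrand justifies discarding the tail. Your route is shorter and avoids the Fubini step and the function $f$ entirely; what it gives up is the explicit bound on $\|k_{s}\|_{BV([0,\delta])}$ (equivalently on $k_{s}(0+)$) in terms of the decay exponent $\alpha$, which the paper's version delivers as a by-product. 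Your closing remark correctly identifies why Assumption~\ref{ass:Regularity} is indispensable for the contrapositive: without finite positive variation the limit need not exist and $k_{s}\ge M$ could fail on every interval.
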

\begin{proof}
Denoting $\tilde{\rho}:=Dk_{s},$ we define the monotone decreasing
functions $k_{s}^{+}(x):=\int_{x}^{\delta}\tilde{\rho}^{+}(\d y)$
and $k_{s}^{-}(x):=\int_{x}^{\delta}\tilde{\rho}^{-}(\d y)$ for $x\in(0,\delta]$.
To verify equivalence of (i) and (ii), we conclude from $k_{s}(\delta+)-k_{s}(x+)=k_{s}^{+}(x)-k_{s}^{-}(x)$
that 
\begin{align*}
\|k_{s}\|_{BV([0,\delta])}-\|\tilde{\rho}^{+}\|_{TV([0,\delta])} & =\|\tilde{\rho}^{-}\|_{TV([0,\delta])}\\
 & =\sup_{x\in(0,\delta]}k_{s}^{-}(x)=\sup_{x\in(0,\delta]}\big\{ k_{s}(x+)+k_{s}^{+}(x)-k_{s}(\delta+)\big\}.
\end{align*}

Hence, $\|k_{s}\|_{BV([0,\delta])}<\infty$ if and only if $k_{s}(0+)<\infty$,
owing to $0\le k_{s}^{+}(0+)=\|\tilde{\rho}^{+}\|_{TV([0,\delta])}.$

The inclusion (i)$\Rightarrow$(iii) immediately follows from the
lower bound in Proposition~\ref{prop:PolDecay}.

To show (iii)$\Rightarrow$(i), we first note that if $\sigma^{2}>0$
then $|\phi(u)|\lesssim e^{-cu^{2}},u\in\R,$ for some constant $c>0$
which contradicts (iii). So let $\sigma^{2}=0$ and $u>0$ without
loss of generality. We deduce similarly to (\ref{eqPhiUpperBound})
and for any $\tau\in(0,\delta)$ and any $u>1/\tau$ 
\begin{align}
|\phi(u)|\le & \exp\Big(\int_{1/u}^{\tau}\int_{y}^{\tau}\frac{1-\cos(ux)}{x}\d x\tilde{\rho}^{-}(\d y)+k_{s}(\tfrac{1}{u}+)\int_{1}^{\tau u}\frac{\cos(x)-1}{x}\d x\Big)\nonumber \\
\le & \exp\Big(-\int_{1}^{\tau u}\frac{1-\cos(x)}{x}\d x\Big(k_{s}(\tfrac{1}{u}+)-\int_{1/u}^{\tau}\tilde{\rho}^{-}(\d y)\Big)\nonumber \\
= & \exp\Big(-\int_{1}^{\tau u}\frac{1-\cos(x)}{x}\d x\big(k_{s}(\delta+)-k_{s}^{+}(\tfrac{1}{u})+k_{s}^{-}(\tau)\big)\Big).\label{eq:phiUpB}
\end{align}
 Using 
\[
\int_{1}^{y}\frac{1-\cos(x)}{x}\d x=\log y+\int_{y}^{\infty}\frac{\cos x}{x}\d x+c_{1}\quad\text{with \quad}c_{1}:=-\int_{1}^{\infty}\frac{\cos x}{x}\d x>0
\]
 and $\lim_{y\to\infty}\int_{y}^{\infty}\frac{\cos x}{x}\d x=0$,
we find a function $f\colon[1,\infty)\to\R$ such that for some $T>1$
\[
\int_{1}^{y}\frac{1-\cos(x)}{x}\d x=f(y)\log y\quad\text{and }\quad f(y)\in(\tfrac{1}{2},2)\text{ for all }y\ge T.
\]
 Combining the lower bound on $|\phi(u)|$ in Condition (iii) and
the upper bound (\ref{eq:phiUpB}), we conclude 
\[
\log c-\alpha\log2-\alpha\log u\le\log|\phi(u)|\le-f(\tau u)\log(\tau u)\big(k_{s}(\delta+)-k_{s}^{+}(\tfrac{1}{u})+k_{s}^{-}(\tau)\big)
\]
 and thus for $c_{2}:=\alpha\log2-\log c$ 
\begin{align*}
 & \log u\big(f(\tau u)\big(k_{s}(\delta+)-k_{s}^{+}(\tfrac{1}{u})+k_{s}^{-}(\tau)\big)-\alpha\big)\\
 & \qquad\le c_{2}+\log(\tau^{-1})f(\tau u)\big(k_{s}(\delta+)-k_{s}^{+}(\tfrac{1}{u})+k_{s}^{-}(\tau)\big).
\end{align*}
 That implies for $u\ge T/\tau>1$ 
\[
\log u\big(\tfrac{1}{2}k_{s}(\delta+)-2\|\tilde{\rho}^{+}\|_{TV([0,\delta])}+\tfrac{1}{2}k_{s}^{-}(\tau)\big)-\alpha\big)\le c_{2}+2\log(\tau^{-1})\big(k_{s}(\delta+)+k_{s}^{-}(\tau)\big).
\]
 Since the right-hand side is independent of $u$ and $\log u\to\infty$
as $u\to\infty$, we obtain $\tfrac{1}{2}k_{s}(\delta+)-2\|\tilde{\rho}^{+}\|_{TV([0,\delta])}+\tfrac{1}{2}k_{s}^{-}(\tau)-\alpha<0$
and therefore 
\[
\|\tilde{\rho}^{-}\|_{TV([0,\delta])}=\sup_{\tau\in(0,\delta)}k_{s}^{-}(\tau)\le2\alpha+4\|\tilde{\rho}^{+}\|_{TV([0,\delta])}<\infty,
\]
 implying $\|k_{s}\|_{BV([0,\delta])}<\infty.$
\end{proof}
We conclude the following corollary on the regularity of the probability
densities of IDDs. Later we will show that there cannot exist more
than $\alpha-1$ derivatives.
\begin{cor}
\label{cor:densities}Let $\mu$ be an infinitely divisible distribution
satisfying Assumption~\ref{ass:Regularity} with $k_{s}\in BV([0,\delta])$
and suppose that $x\nu$ is absolutely continuous on $\R.$ Let $\alpha:=k_{s}(0+)>1$.
Then $\mu$ admits a Lebesgue-density $f\in C^{n}(\R)$ for any integer
$0\le n<\alpha-1$.\end{cor}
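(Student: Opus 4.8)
The plan is to obtain both the density and its derivatives directly from Fourier inversion, the only nontrivial ingredient being the polynomial upper bound on $|\phi|$ furnished by Proposition~\ref{prop:PolDecay}. First I would reduce to the case $\sigma^{2}=0$: if $\sigma^{2}>0$ then $|\phi(u)|\le e^{-\sigma^{2}u^{2}/2}$ decays faster than any polynomial, so $u\mapsto|u|^{n}\phi(u)$ is integrable for every $n$ and $f\in C^{\infty}(\R)$, which is more than claimed. So assume $\sigma^{2}=0$. Since $\mu$ satisfies Assumption~\ref{ass:Regularity} with $k_{s}\in BV([0,\delta])$, the hypotheses of Proposition~\ref{prop:PolDecay} hold and we obtain, for every $\eps>0$, the bound $|\phi(u)|\lesssim(1+|u|)^{-\alpha+\eps}$ with $\alpha=k_{s}(0+)$.

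Next I would fix an integer $n$ with $0\le n<\alpha-1$ and choose $\eps\in(0,\alpha-1-n)$, which is possible precisely because the inequality $n<\alpha-1$ is strict. With this $\eps$ the upper bound yields
\[
\int_{\R}|u|^{n}|\phi(u)|\,\d u\lesssim\int_{\R}|u|^{n}(1+|u|)^{-\alpha+\eps}\,\d u<\infty,
\]
since the integrand is of order $|u|^{n-\alpha+\eps}$ as $|u|\to\infty$ with exponent $n-\alpha+\eps<-1$. In particular, taking $n=0$ and using $\alpha>1$, we get $\phi\in L^{1}(\R)$, so by Fourier inversion $\mu$ admits the bounded continuous Lebesgue density
\[
f(x)=\frac{1}{2\pi}\int_{\R}e^{-iux}\phi(u)\,\d u.
\]

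Finally, since $|u|^{j}\le1+|u|^{n}$ gives $\int_{\R}|u|^{j}|\phi(u)|\,\d u<\infty$ for every $0\le j\le n$, one may differentiate under the integral sign, so that
\[
f^{(j)}(x)=\frac{1}{2\pi}\int_{\R}(-iu)^{j}e^{-iux}\phi(u)\,\d u,\qquad0\le j\le n,
\]
is well defined and continuous in $x$ by dominated convergence, the dominating function $|u|^{j}|\phi(u)|$ being integrable; hence $f\in C^{n}(\R)$. I do not expect a genuine obstacle beyond the integrability estimate: the entire corollary amounts to transferring the decay rate $\alpha$ of $|\phi|$ from Proposition~\ref{prop:PolDecay} into integrability of $u\mapsto|u|^{n}\phi(u)$, after which the smoothness of $f$ is a standard differentiation-under-the-integral argument. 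The one point that requires care is the strict choice of $\eps$ relative to $\alpha-1-n$, which is exactly what forces the sharp threshold $n<\alpha-1$.
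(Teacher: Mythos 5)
Your proof is correct and follows essentially the same route as the paper: apply the upper bound $|\phi(u)|\lesssim(1+|u|)^{-\alpha+\eps}$ from Proposition~\ref{prop:PolDecay}, choose $\eps<\alpha-1-n$ to get $\int|u|^{n}|\phi(u)|\,\d u<\infty$, and conclude by Fourier inversion and differentiation under the integral. The only cosmetic difference is that the paper first invokes Sato's Theorem~27.7 (via the absolute continuity of $x\nu$ and infinite activity) to establish that $\mu$ has a density, whereas you obtain this directly from $\phi\in L^{1}(\R)$, which is a harmless and arguably cleaner shortcut.
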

\begin{proof}
Due to the absolute continuity of $\nu$ and infinite activity $\int_{\R}\nu(\d x)=\infty$
by $\alpha>0$, the measure $\mu$ is absolutely continuous \citep[Thm. 27.7]{sato1999}.
Proposition~\ref{prop:PolDecay} yield $|\phi(u)|\lesssim(1+|u|)^{-\alpha+\eps}$
for any $\eps>0$. Therefore, $\int_{\R}|\phi(u)||u|^{n}\d u<\infty$
for any $n<\alpha-1$. By standard Fourier analysis we obtain $f:=\F^{-1}\phi\in C^{n}(\R).$
\end{proof}

\section{A Fourier multiplier theorem\label{sec:fourierMultplier}}

To motivate Fourier multiplier theorem and to explain its statistical
relevance, let us dive a little deeper into the statistical analysis
of the deconvolution model~\eqref{eq:decon}. Assume the laws of
$X_{1}$ and $\eps_{1}$ have Lebesgue densities $f$ and $f_{\eps}$,
respectively. Consequently, $Y_{1}$ is distributed according to $f_{Y}=f\ast f_{\eps}$.
Denoting the characteristic functions of $Y_{1}$ and $\eps_{1}$
by $\phi_{Y}$ and $\phi_{\eps}$, respectively, we obtain the Fourier
inversion formula 
\[
f=\F^{-1}\Big[\frac{\phi_{Y}}{\phi_{\eps}}\Big]=\F^{-1}\Big[\frac{1}{\phi_{\eps}}\Big]\ast f_{Y},
\]
where the second equality has to be understood in distributional sense.
Hence, the deconvolution map is given by $g\mapsto\F^{-1}[\F g/\phi_{\eps}]$.
Slightly abusing notation, we denote it by $\F^{-1}[1/\phi_{\eps}]$.
Replacing $\phi_{Y}$ by the empirical characteristic function of
$(Y_{j})$ and regularizing with a band-limited kernel $K$ with bandwidth
$h>0$, the previous display gives us immediately the natural kernel
density estimator, which was proposed by \citet{fan1991} 
\[
\hat{f}_{h}:=\F^{-1}\Big[\frac{\F K(h\bull)}{\phi_{\eps}}\Big]\ast\mu_{n}
\]
 for the empirical measure $\mu_{n}=\sum_{j=1}^{n}\delta_{Y_{j}}$
with Dirac measure $\delta_{y}$ in the point $y\in\R$. To estimate
the linear functional $\int\zeta(x)f(x)\d x$ for suitable functions
$\zeta$, say $\zeta\in L^{1}(\R)\cap L^{\infty}(\R)$, a plug-in
approach yields 
\begin{align*}
\int\zeta(x)\hat{f}_{h}(x)\d x & =\int\zeta(x)\Big(\F^{-1}\Big[\frac{\F K(h\bull)}{\phi_{\eps}}\Big]\ast\mu_{n}\Big)(x)\d x\\
 & =\int\Big(\F^{-1}\Big[\frac{\F K(-h\bull)}{\phi_{\eps}(-\bull)}\Big]\ast\zeta\Big)(x)\mu_{n}(\d x).
\end{align*}
Since the regularizing $\F K(h\bull)$ degenerates to one as $h\downarrow0$,
we have to study the mapping properties of the deconvolution operator
$\F^{-1}[1/\phi_{\eps}(-\bull)]$. 

Applying a Mihlin multiplier theorem, \citet{nicklReiss2012} quantified
the mapping properties of the Fourier multiplier in a Besov scale
under certain assumptions. However, their conditions already exclude
simple examples like the Poisson process, which satisfies \eqref{eq:fmp}
even for $\alpha=0$. The target of this section is to study necessary
and sufficient conditions under which $1/\phi$ satisfies the Fourier
multiplier property. %The Fourier multiplier property turns out to be very natural for infinitely divisible distributions: A polynomial decay of the characteristic function is necessary and already sufficient to conclude that $1/\phi$ is a Fourier multiplier on Besov spaces (modulo mild regularity conditions).

Using that $B_{p,q}^{s}(\R)\ni f\mapsto\F^{-1}[(1+iu)^{\alpha}\F f]\in B_{p,q}^{s+\alpha}(\R)$
is an isomorphism for any $\alpha\in\R$ and that the set of Fourier
multipliers on $B_{p,q}^{s}(\R)$ does not depend on $s,q$ and are
nested in $p$ \citep[Prop. 2.6.2 and Thm. 2.6.3]{triebel2010}, the
characteristic function $\phi$ has to satisfy necessarily 
\[
|\phi(u)|\gtrsim(1+|u|)^{-\alpha},\quad u\in\R.
\]
 In the previous section we have characterized infinitely divisible
distributions whose characteristic function decays only with a polynomial
rate. Our main result shows that under the regularity Assumption~\ref{ass:Regularity}
the necessary polynomial decay of $\phi$ already implies that $1/\phi$
is a Fourier multiplier on Besov spaces.
\begin{thm}
\label{thm:FourierMult} Let $\mu$ be an infinitely divisible distribution
with characteristic function $\phi$ satisfying Assumption~\ref{ass:Regularity}.
If and only if one (and thus all) of the conditions (i) to (iii) of
Theorem~\ref{thm:CharPolDec} are satisfied, $1/\phi$ is a Fourier
multiplier on Besov spaces: There exists some $\alpha>0$ such that
for all $s\in\R,1\le p,q\le\infty$ the linear map 
\[
B_{p,q}^{s+\alpha}(\R)\ni f\mapsto\F^{-1}\Big[\frac{\F f}{\phi}\Big]\in B_{p,q}^{s}(\R)
\]
 is bounded.\end{thm}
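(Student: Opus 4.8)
The plan is to prove the two implications separately, the necessity being immediate and the sufficiency carrying the bulk of the work. For the \emph{only if} direction I would simply invoke the discussion preceding the theorem: since the lifting $f\mapsto\F^{-1}[(1+iu)^{\alpha}\F f]$ is an isomorphism between $B_{p,q}^{s+\alpha}(\R)$ and $B_{p,q}^{s}(\R)$ and the class of Fourier multipliers neither depends on $s,q$ nor grows with $p$, boundedness of the map forces $|\phi(u)|\gtrsim(1+|u|)^{-\alpha}$, which is exactly condition (iii) of Theorem~\ref{thm:CharPolDec}; hence (i)--(iii) all hold. For the \emph{if} direction I would first reduce to an order-zero statement: by the same lifting isomorphism it suffices to show that the symbol $M(u):=(1+iu)^{-\alpha}/\phi(u)$ is a Fourier multiplier on $B_{p,q}^{0}(\R)$ for all $1\le p,q\le\infty$. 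Because the associated multiplier operators compose, the set of such symbols is an algebra, so I may factor $\phi=\phi_{1}\phi_{0}$ by splitting $\nu$ at $\delta$ and treat the two factors independently.

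The large-jump factor $\phi_{1}(u)=\exp(i\gamma'u+\int_{|x|>\delta}(e^{iux}-1)\nu(\d x))$ is the characteristic function of a drift-shifted compound Poisson law, since $\nu|_{\{|x|>\delta\}}$ is finite. Expanding the exponential, I obtain $1/\phi_{1}=\F\rho_{1}$ for a finite signed measure $\rho_{1}$ (a convergent series of translated convolution powers of $\nu|_{\{|x|>\delta\}}$), so convolution with $\rho_{1}$ is bounded on every $B_{p,q}^{s}(\R)$ with norm at most $\|\rho_{1}\|_{TV}$. This step accommodates precisely the examples, such as the Poisson process with $\alpha=0$, that the pointwise Mihlin conditions of \citet{nicklReiss2012} exclude, and I expect it to be the conceptual heart of the improvement.

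It then remains to show that $M_{0}(u):=(1+iu)^{-\alpha}/\phi_{0}(u)$ with $\phi_{0}(u)=\exp(\int_{|x|\le\delta}(e^{iux}-1-iux)\nu(\d x))$ is an order-zero multiplier, for which I would verify the dyadic Hörmander--Mihlin condition $\sup_{j\ge0}\|M_{0}(2^{j}\cdot)\eta\|_{H_{2}^{a}}<\infty$ for a fixed annular bump $\eta$ and some $a>1/2$, the sufficient condition underlying the multiplier theorem cited from \citet{triebel2010}. The zeroth-order bound $|M_{0}(u)|\lesssim1$ is the lower bound of Proposition~\ref{prop:PolDecay} applied to the symmetric density $k_{s}$. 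After rescaling by $2^{j}$ the first-order bound reduces to the logarithmic-derivative estimate $|u|\,|(\log\phi_{0})'(u)|\lesssim1$, that is $|u\int_{|x|\le\delta}x(e^{iux}-1)\nu(\d x)|\lesssim1$. Its real (symmetric) part equals $-\int_{0}^{\delta}\sin(ux)k_{s}(x)\,\d x$ and is $O(1/u)$ by integrating by parts against the finite measure $Dk_{s}$, exactly as in the proof of Proposition~\ref{prop:PolDecay}.

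The main obstacle is the imaginary part $\int_{0}^{\delta}x(\cos(ux)-1)\nu_{a}(\d x)$ arising from the asymmetry $\nu_{a}=\nu-\nu(-\bull)$ of the small jumps: its $u$-independent contribution $-\int_{0}^{\delta}x\,\nu_{a}(\d x)$ is a residual drift that I would absorb into $\phi_{1}$, but the oscillatory remainder $\int_{0}^{\delta}x\cos(ux)\nu_{a}(\d x)$ is a priori only seen to vanish through Riemann--Lebesgue. Here one may use that the density of $x\nu_{a}$ exists at all, because $\nu_{s}$ possessing a density forces both $\nu$ and $\nu(-\bull)$ to be absolutely continuous near the origin; the quantitative rate $O(1/u)$ demanded by the criterion I would then recover by a further integration by parts, exploiting that under Assumption~\ref{ass:Regularity} the relevant jump density near zero is of bounded variation, so that the boundary and $Dk_{s}$ terms are again of order $1/u$. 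Reconciling this genuinely non-decaying phase — the very phenomenon that makes the Poisson multiplier fail Mihlin while remaining a multiplier — with the uniform dyadic estimate is the delicate point I expect to dominate the technical effort.
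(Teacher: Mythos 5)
Your argument is essentially the paper's own proof: the same necessity argument via the lifting isomorphism and the nesting of multiplier classes, the same splitting of $\nu$ at $\delta$ into a compound Poisson factor whose reciprocal is the Fourier transform of a finite signed measure, and the same treatment of the small-jump factor via the zeroth-order bound from Proposition~\ref{prop:PolDecay} together with the estimate $|(\log\phi_{c})'(u)|\lesssim(1+|u|)^{-1}$ (the paper packages the resulting symbol conditions through Corollary~4.11 of \citet{girardiWeis2003} rather than verifying the dyadic H\"ormander--Mihlin condition by hand, and drops the compensator $-iux$ outright since $\int_{0}^{1}x\nu_{s}(\d x)<\infty$, which is the same drift absorption you perform). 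The ``delicate point'' you isolate --- obtaining the $O(1/u)$ decay of $\int_{0}^{\delta}\cos(ux)\,x\nu_{a}(\d x)$ when Assumption~\ref{ass:Regularity} only controls the symmetrized density $k_{s}$ --- is real, but the paper resolves it no more explicitly than you do: it simply invokes ``the bounded variation of $x\nu$ near the origin,'' i.e.\ tacitly uses that the one-sided densities of $x\nu$ are themselves of bounded variation, so your attempt is at the same level of rigour as the published argument.
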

\begin{proof}
If $1/\phi$ is a Fourier multiplier, then (iii) in Theorem~\ref{thm:CharPolDec}
has to be fulfilled as carried out above.

Now, let the assumptions of Theorem~\ref{thm:CharPolDec} be satisfied.
Using $\sigma^{2}=0$ and noting that $k_{s}\in BV([0,\delta])$ implies
boundedness of $k_{s}$ and thus $\int_{0}^{1}x\nu_{s}(\d x)<\infty$,
the characteristic function of $\mu$ can be represented by 
\[
\phi(u)=\exp\Big(i\gamma_{0}u+\int\big(e^{iux}-1\big)\nu(\d x)\Big)=\phi_{c}(u)\phi_{p}(u),\quad\text{for }\quad u\in\R,
\]
 where $\gamma_{0}=\gamma-\int_{0}^{1}x\nu_{s}(\d x)\in\R$ is a drift
parameter and 
\[
\phi_{c}(u):=\exp\Big(\int_{[-\delta,\delta]}\big(e^{iux}-1\big)\nu(\d x)\Big),\quad\phi_{p}(u):=\exp\Big(i\gamma_{0}u+\int_{\R\setminus[-\delta,\delta]}\big(e^{iux}-1\big)\nu(\d x)\Big).
\]
 Defining $\mu_{c}:=\F^{-1}[\phi_{c}]$ and $\mu_{c}:=\F^{-1}[\phi_{p}]$,
this yields the decomposition $\mu=\mu_{c}\ast\mu_{p}$ into a convolution
of an IDD with compactly supported jump measure and distribution of
compound Poisson type. The deconvolution operator decomposes into
a composition $\F^{-1}[1/\phi]=\F^{-1}[1/\phi^{c}]\ast\F^{-1}[1/\phi^{p}]$,
where 
\[
\F^{-1}[1/\phi_{p}]=\delta_{-\gamma_{0}}\ast\Big(e^{\nu(\R\setminus[-\delta,\delta])}\sum_{k=0}^{\infty}\frac{(-1)^{k}}{k!}\big(\nu|_{(\R\setminus[-\delta,\delta]}\big)^{\ast k}\Big)
\]
 is a finite signed measure. Since convolution with a finite signed
measure is a bounded map on $L^{p}(\R)$ for any $0<p\le\infty,$
we conclude from the Littlewood--Paley representation of Besov spaces
that $B_{p,q}^{s}(\R)\ni f\mapsto\F^{-1}[1/\phi_{p}]\ast f\in B_{p,q}^{s}(\R)$
is a bounded linear map. For $\phi_{c}$ we use $\phi_{c}'(u)=i\F[x\nu|_{[-\delta,\delta]}](u)\phi_{c}(u)$
and and the estimate $|\F[x\nu|_{[-\delta,\delta]}](u)|\lesssim(1+|u|)^{-1}$
by the bounded variation of $x\nu$ near the origin. The polynomial
decay of $\phi_{c}$ implies then for some $\alpha>0$ 
\begin{align*}
(1+|u|)^{-\alpha}|\phi_{c}^{-1}(u)| & \lesssim1\quad\text{and}\\
\quad(1+|u|)^{-\alpha}|(\phi_{c}^{-1})'(u)| & =(1+|u|)^{-\alpha}|\F[x\nu|_{[-\delta,\delta]}](u)/\phi_{c}(u)|\lesssim(1+|u|)^{-1}.
\end{align*}
 Therefore, we can apply Corollary~4.11 by \citet{girardiWeis2003}
to conclude that $(1+|u|)^{-\alpha}/\phi_{c}(u)$ is a Fourier muliplier
on all Besov spaces $B_{p,q}^{s}(\R)$ for all $s\in\R,1\le p,q\le\infty.$
The assertion follows because $f\mapsto\F^{-1}[(1+iu)^{\alpha}\F f]$
is an isomorphism from $B_{p,q}^{s+\alpha}(\R)$ onto $B_{p,q}^{s}(\R)$.
\end{proof}
As a small application of Theorem~\ref{thm:FourierMult} we will
extend Corollary~\ref{cor:densities} generalizing of the result
for self-decomposable distributions by \citet{wolfe1971} to a much
larger class, but for simplicity restricted to non-integer $\alpha=k_{s}(0+).$
\begin{cor}
Let $\mu$ be an infinitely divisible distribution satisfying Assumption~\ref{ass:Regularity}
with $k_{s}\in BV([0,\delta])$ and suppose that $x\nu$ is absolutely
continuous on $\R.$ Let $\alpha:=k_{s}(0+)\notin\mathbb{N}$ be positive.
Then the Lebesgue-density of $\mu$ satisfies $f\in C^{n}(\R)$ for
$n\in\mathbb{N}$ if and only if $0\le n<\alpha-1$.\end{cor}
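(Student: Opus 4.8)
The statement splits into two implications. The implication ``$0\le n<\alpha-1\Rightarrow f\in C^{n}(\R)$'' is exactly Corollary~\ref{cor:densities}, so all the work lies in the converse: if $f\in C^{n}(\R)$ for some integer $n$, then $n<\alpha-1$. The plan is to exploit the observation that \emph{deconvolving the density $f$ by $1/\phi$ returns the Dirac measure}: since $f\in L^{1}(\R)$ satisfies $\F f=\phi$ and $\phi$ never vanishes, we have $\F^{-1}[\F f/\phi]=\F^{-1}[1]=\delta_{0}$ as tempered distributions. The Fourier multiplier theorem then transfers any Besov regularity of $f$ into Besov regularity of $\delta_{0}$, whose smoothness is sharply capped, and that cap is what forces $n<\alpha-1$.

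Concretely, I would first record that, by Theorem~\ref{thm:FourierMult} together with the upper bound of Proposition~\ref{prop:PolDecay}, the map $g\mapsto\F^{-1}[\F g/\phi]$ is bounded from $B^{s+\alpha'}_{\infty,\infty}(\R)$ to $B^{s}_{\infty,\infty}(\R)$ for every $s\in\R$ and every $\alpha'>\alpha$ (the construction in that proof needs only $(1+|u|)^{-\alpha'}/|\phi|\lesssim1$ together with the companion derivative bound, both of which Proposition~\ref{prop:PolDecay} grants for any such $\alpha'$). Assume, for contradiction, that $f\in C^{n}(\R)$ with $n\ge\alpha-1$; since $\alpha\notin\N$ this means $n>\alpha-1$, so we may fix $\alpha'\in(\alpha,n+1)$. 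Using that $f$ and its derivatives up to order $n$ are bounded, the embedding $C^{n}(\R)\hookrightarrow B^{n}_{\infty,\infty}(\R)$ gives $f\in B^{n}_{\infty,\infty}=B^{(n-\alpha')+\alpha'}_{\infty,\infty}$. Applying the bounded operator with $s=n-\alpha'$, and noting that its action on $f$ agrees with the distributional identity above, yields $\delta_{0}\in B^{\,n-\alpha'}_{\infty,\infty}(\R)$. But it is classical (see \citet{triebel2010}) that $\delta_{0}\in B^{t}_{\infty,\infty}(\R)$ holds if and only if $t\le-1$, whereas $n-\alpha'>n-(n+1)=-1$. This contradiction forces $n<\alpha-1$; together with Corollary~\ref{cor:densities} and $\alpha\notin\N$ it gives the claimed equivalence $f\in C^{n}(\R)\iff0\le n<\alpha-1$.

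The conceptual heart, and the step I expect to carry the weight, is the reduction to $\delta_{0}$: it converts the qualitative question ``how smooth can $f$ be?'' into the sharp, explicit Besov threshold of the Dirac measure, and it is here that the isomorphism structure hidden in Theorem~\ref{thm:FourierMult} does the real work, in particular the factorisation $\phi=\phi_{c}\phi_{p}$ that absorbs the oscillatory drift $e^{i\gamma_{0}u}$ into a finite signed measure (without which the naive symbol estimate on $1/\phi$ would fail). Two minor points must be handled with care. First, the multiplier order is sharp only up to an arbitrarily small loss, because Proposition~\ref{prop:PolDecay} controls $|\phi|$ merely up to factors $(1+|u|)^{\pm\eps}$; this is harmless precisely because $\alpha\notin\N$ leaves room to choose $\alpha'\in(\alpha,n+1)$. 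Second, one must justify that $f\in C^{n}(\R)$ entails boundedness of $f,f',\dots,f^{(n)}$ so that the Hölder--Zygmund embedding into $B^{n}_{\infty,\infty}(\R)$ applies; this is routine here, since the relevant derivatives are inverse Fourier transforms of the smooth, decaying symbol $\phi$.
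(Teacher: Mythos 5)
Your proof is correct, and its engine is the same as the paper's: apply the Fourier multiplier theorem with order $\alpha+\eps$ for arbitrarily small $\eps>0$ (which is indeed what the proof of Theorem~\ref{thm:FourierMult} delivers, and how the paper itself invokes it) to deconvolve an object whose regularity is sharply capped. The difference lies in the test object. You deconvolve $f$ itself and land on $\delta_{0}$, using the sharp threshold $\delta_{0}\in B^{t}_{\infty,\infty}(\R)$ if and only if $t\le-1$; the paper instead convolves $f$ with $g(x)=e^{-x}\1_{[0,\infty)}(x)$, deconvolves $f\ast g$ to recover $g$, and needs only the embedding $B^{\eps}_{\infty,\infty}(\R)\subset C^{0}(\R)$ together with the discontinuity of $g$. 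Since $\F g(u)=(1-iu)^{-1}$ is an order-one lift, the two contradictions are equivalent ($g\in B^{t+1}_{\infty,\infty}$ iff $\delta_{0}\in B^{t}_{\infty,\infty}$); what the paper's detour buys is that the multiplier is only ever applied to a genuine bounded function and no sharp Besov characterization of the Dirac measure has to be cited, while your version is one step shorter and makes the mechanism (``deconvolving the density returns the point mass'') more transparent.

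One caveat, which you flag but justify incorrectly: the embedding $C^{n}(\R)\hookrightarrow B^{n}_{\infty,\infty}(\R)$ requires $f,\dots,f^{(n)}$ to be bounded, and your argument for this --- that the derivatives are inverse Fourier transforms of the decaying symbol $\phi$ --- is circular, since $\F^{-1}[(-iu)^{k}\phi]$ is guaranteed bounded only when $|u|^{k}|\phi(u)|$ is integrable, i.e.\ for $k<\alpha-1$, which is precisely the range you are trying to rule out. The paper's own proof glosses over the same point when it writes $f\ast g\in C^{n+2}(\R)\subset B^{n+2}_{\infty,\infty}$, so this is a shared blemish rather than a defect of your route; the clean reading in both cases is to interpret $C^{n}$ as $C^{n}$ with bounded derivatives.
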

\begin{proof}
Let $n$ satisfy $n-1<\alpha<n+2$ and suppose $f\in C^{n+1}(\R)$.
For the function $g(x)=e^{-x}\1_{[0,\infty)},x\in\R,$ a direct calculation
or using $g\in B_{1,1}^{1}(\R)$ shows $f\ast g\in C^{n+2}(\R)\subset B_{\infty,\infty}^{n+2}$.
Then Theorem~\ref{thm:FourierMult} applied to $f\ast g$ yields
for any $\epsilon>0$ that $g=\F^{-1}[\F[f\ast g]/\phi]\in B_{\infty,\infty}^{n+2-\alpha-\eps}(\R)$.
Since $n+2-\alpha>0$ and $\epsilon$ can be chosen small enough,
the Besov embedding yields $g\in B_{\infty,\infty}^{n+2-\alpha-\eps}(\R)\subset C^{0}(\R)$
which contradicts to $g$ being discontinuous.
\end{proof}
\bibliographystyle{chicago}
\bibliography{library}

\begin{thebibliography}{}

\bibitem[\protect\citeauthoryear{Belomestny and Rei{\ss}}{Belomestny and
  Rei{\ss}}{2006}]{belomestnyReiss2006}
Belomestny, D. and M.~Rei{\ss} (2006).
\newblock Spectral calibration of exponential {L}\'evy models.
\newblock {\em Finance Stoch.\/}~{\em 10\/}(4), 449--474.

\bibitem[\protect\citeauthoryear{Dattner, Rei{\ss}, and Trabs}{Dattner
  et~al.}{2014}]{dattnerEtAl2013}
Dattner, I., M.~Rei{\ss}, and M.~Trabs (2014).
\newblock Adaptive estimation of quantiles in deconvolution with unknown error
  distribution.
\newblock {\em Bernoulli\/}, to appear.

\bibitem[\protect\citeauthoryear{Fan}{Fan}{1991}]{fan1991}
Fan, J. (1991).
\newblock {On the Optimal Rates of Convergence for Nonparametric Deconvolution
  Problems}.
\newblock {\em Ann. Statist.\/}~{\em 19\/}(3), 1257--1272.

\bibitem[\protect\citeauthoryear{Girardi and Weis}{Girardi and
  Weis}{2003}]{girardiWeis2003}
Girardi, M. and L.~Weis (2003).
\newblock {Operator-valued Fourier multiplier theorems on Besov spaces}.
\newblock {\em Math. Nachr.\/}~{\em 251\/}(1), 34--51.

\bibitem[\protect\citeauthoryear{Hartman and Wintner}{Hartman and
  Wintner}{1942}]{hartmanWintner1942}
Hartman, P. and A.~Wintner (1942).
\newblock On the infinitesimal generators of integral convolutions.
\newblock {\em Amer. J. Math.\/}~{\em 64}, 273--298.

\bibitem[\protect\citeauthoryear{Kallenberg}{Kallenberg}{1981}]{kallenberg1981}
Kallenberg, O. (1981).
\newblock Splitting at backward times in regenerative sets.
\newblock {\em Ann. Probab.\/}~{\em 9\/}(5), 781--799.

\bibitem[\protect\citeauthoryear{Knopova and Schilling}{Knopova and
  Schilling}{2013}]{knopovaSchilling2013}
Knopova, V. and R.~L. Schilling (2013).
\newblock A note on the existence of transition probability densities of
  {L}\'evy processes.
\newblock {\em Forum Math.\/}~{\em 25\/}(1), 125--149.

\bibitem[\protect\citeauthoryear{Nickl and Rei{\ss}}{Nickl and
  Rei{\ss}}{2012}]{nicklReiss2012}
Nickl, R. and M.~Rei{\ss} (2012).
\newblock A {D}onsker theorem for {L\'e}vy measures.
\newblock {\em J. Funct. Anal.\/}~{\em 263\/}(10), 3306--3332.

\bibitem[\protect\citeauthoryear{Orey}{Orey}{1968}]{orey1968}
Orey, S. (1968).
\newblock On continuity properties of infinitely divisible distribution
  functions.
\newblock {\em Ann. Math. Statist.\/}~{\em 39}, 936--937.

\bibitem[\protect\citeauthoryear{Sato}{Sato}{1999}]{sato1999}
Sato, K.-I. (1999).
\newblock {\em {L\'evy Processes and Infinitely Divisible Distributions}}.
\newblock Cambridge: Cambridge University Press.

\bibitem[\protect\citeauthoryear{Schmidt{--}Hieber, Munk, and
  D{\"u}mbgen}{Schmidt{--}Hieber et~al.}{2013}]{SchmidtHieberEtAll2012}
Schmidt{--}Hieber, J., A.~Munk, and L.~D{\"u}mbgen (2013).
\newblock {Multiscale Methods for Shape Constraints in Deconvolution:
  Confidence Statements for Qualitative Features}.
\newblock {\em Annals of Statistics\/}~{\em 41\/}(3), 1299--1328.

\bibitem[\protect\citeauthoryear{S{\"o}hl and Trabs}{S{\"o}hl and
  Trabs}{2012}]{soehlTrabs2012}
S{\"o}hl, J. and M.~Trabs (2012).
\newblock A uniform central limit theorem and efficiency for deconvolution
  estimators.
\newblock {\em Electron. J. Statist.\/}~{\em 6}, 2486--2518.

\bibitem[\protect\citeauthoryear{Trabs}{Trabs}{2014a}]{Diss2014}
Trabs, M. (2014a).
\newblock {\em Adaptive and efficient quantile estimation: From deconvolution
  to L\'evy processes}.
\newblock Ph.\ D. thesis, Humboldt-Universit{\"a}t zu Berlin.

\bibitem[\protect\citeauthoryear{Trabs}{Trabs}{2014b}]{trabs:2011}
Trabs, M. (2014b).
\newblock Calibration of self-decomposable {L\'e}vy models.
\newblock {\em Bernoulli\/}~{\em 20\/}(1), 109--140.

\bibitem[\protect\citeauthoryear{Triebel}{Triebel}{2010}]{triebel2010}
Triebel, H. (2010).
\newblock {\em Theory of Function Spaces}.
\newblock Basel: {Birkh\"auser Verlag}.
\newblock Reprint of the 1983 Edition.

\bibitem[\protect\citeauthoryear{Wolfe}{Wolfe}{1971}]{wolfe1971}
Wolfe, S.~J. (1971).
\newblock On the continuity properties of {$L$} functions.
\newblock {\em Ann. Math. Statist.\/}~{\em 42}, 2064--2073.

\end{thebibliography}

\end{document}